\theoremstyle{plain}
\theoremstyle{plain}\newtheorem{theorem}{Theorem}[section]
\theoremstyle{plain}\newtheorem{lemma}[theorem]{Lemma}
\theoremstyle{plain}
\theoremstyle{plain}\newtheorem{proposition}[theorem]{Proposition}
\theoremstyle{plain}
\numberwithin{equation}{section}
\newcommand{\be}{\begin{equation}}
\newcommand{\ee}{\end{equation}}
 \newcommand{\ba}{\begin{aligned}}
 \newcommand{\ea}{\end{aligned}}
  \newcommand{\ben}{\begin{enumerate}}
   \newcommand{\een}{\end{enumerate}}
\newcommand{\Rmnum}[1]{\expandafter\@slowromancap\romannumeral #1@}
\begin{document}

\title{Ko{\l}odziej-Tosatti's conjecture on compact Hermitian manifold with bounded mass property}

\author[]{Lei Zhang}

\address{Yau Mathematical Sciences Center, Tsinghua University, Beijing, 100084, P.R.China}
\email{leizhang92@mail.tsinghua.edu.cn}

\subjclass[2010]{}
\keywords{}

\begin{abstract}
In this note, we show a conjecture of Ko{\l}odziej-Tosatti about Morse-type integrals in nef $(1,1)$ classes on compact Hermitian manifold with bounded mass property. As a consequence, we give positive answers to Demailly-P\u{a}un's conjecture and Tosatti-Weinkove's conjecture when compact Hermitian manifold with bounded mass property.
\end{abstract}
\smallskip
\maketitle

\section{Introduction}
Let $(X^n,\omega)$ be a compact Hermitian manifold and $\alpha$ a closed real $(1,1)$ form on $X$. A classes $[\alpha]\in H^{1,1}_{BC}(X;\mathbb{R})$ is called nef (numerical effective) if for every $\varepsilon>0$ there exists $u_{\varepsilon}\in C^{\infty}(X,\mathbb{R})$ such that $$\alpha+\sqrt{-1}\partial\bar{\partial}u_{\varepsilon}\ge-\varepsilon\omega.$$
A classes $[\alpha]\in H^{1,1}_{BC}(X;\mathbb{R})$ is called big if for every $\varepsilon>0$ there exists a quasi-psh function $u$ on $X$ (locally the sum of psh plus smooth) such that $$T:=\alpha+\sqrt{-1}\partial\bar{\partial}u\ge\varepsilon\omega$$ holds in the current sense. In this case, Demailly-P\u{a}un \cite[Theorem 2.12]{DemPau04} show that $X$ must be bimeromorphic to a compact K\"ahler manifold.

Our main interest is in a version of conjecture of Demailly \cite{Dem10} for nef classes on non-K\"ahler manifolds, which also encompasses a question proposed by Tosatti in \cite[Remark 3.2]{Tos16}. More precisely, Ko{\l}odziej-Tosatti \cite{KolTos21} proposed the following conjecture.

\textbf{Conjecture 1:}(Ko{\l}odziej-Tosatti, \cite[Conjecture 1.2]{KolTos21}) Let $(X^n,\omega)$ be a compact Hermitian manifold and $\alpha$ a closed real $(1,1)$ form such that $[\alpha]$ is nef. Then we have
\begin{equation}
\int_X\alpha^n=\inf_{u\in C^{\infty}(X,\mathbb{R})}\int_{X(\alpha+\sqrt{-1}\partial\bar{\partial}u,0)}\big(\alpha+\sqrt{-1}\partial\bar{\partial}u\big)^n
\end{equation}
where $X(\alpha+\sqrt{-1}\partial\bar{\partial}u,0)$ denotes the set of all points $x\in X$ such that $(\alpha+\sqrt{-1}\partial\bar{\partial}u)\ge0.$

In \cite{Dem10} Demailly shows that the inequality
$$\int_X\alpha^n\leq\inf_{u\in C^{\infty}(X,\mathbb{R})}\int_{X(\alpha+\sqrt{-1}\partial\bar{\partial}u,0)}\big(\alpha+\sqrt{-1}\partial\bar{\partial}u\big)^n.
$$
holds (see Step 2 in the proof of Theorem \ref{Kolo-To-ture}) and the inequality holds if $X$ is projective surface and $\alpha\in NS_{\mathbb{R}}(X)$ (\cite[Theorem 1.18]{Dem10}). The K\"ahler version of conjecture 1 was shown by Demailly \cite[Sec 3.5]{Dem11} whenever the orthogonality conjecture of Zariski decompositions of BDPP \cite{BDPP13} holds and has been proved by Ko{\l}odziej-Tosatti \cite[Proposition 2.2]{KolTos21} when $\alpha$ is big or more generally in Fujiki class. In the non-K\"ahler case, the conjecture 1 holds \cite[Proposition 2.1]{KolTos21}if either the class $[\alpha]$ is semi-positive or the manifold $X$ admits a Hermitian metric $\omega$ with
\begin{equation}\label{pluriclosed+}
\partial\bar{\partial}\omega=0=\partial\bar{\partial}\omega^2.
\end{equation}

Boucksom \cite{Bou02} defines the volume of $[\alpha]$ to be
$$vol([\alpha]):=\sup_T\int_XT_{ac}^n>0,$$
where the supermum is over all K\"ahler currents in the class $[\alpha]$ and $T_{ac}$ is the absolutely continuous part of the Lebesgue decomposition with respect to the Lebesgue measure on $X$. If $[\alpha]$ is not big, then one can define $vol([\alpha])=0.$ Boucksom \cite[Conjecture]{Bou02} conjectured that there exists a K\"ahler current in $[\alpha]\in H^{1,1}_{BC}$ on a compact Hermitian manifold $(X,\omega)$ provided that $\int_X\alpha^n>0.$ Boucksom \cite[Theorem 4.7]{Bou02} has proved his conjecture when $\omega$ is K\"ahler. In the non-K\"ahler manifold, Wang \cite{Wan16,Wan19} has produced partial results on Boucksom's conjecture when $[\alpha]$ is pseudo-effective real $(1,1)$-class. Recently, Boucksom-Guedj-Lu \cite{BGL24+} shown that the lower volume function vanishes outside the big cone (\cite[Theorem 3.20]{BGL24+}). As a simple consequence, they proved that $X$ is Fujiki if $X$ has the bounded mass property and it admits a closed positive $(1,1)$-current $T$ such that $\int_XT^n>0$ (that is, $\int_X\alpha^n>0$, see\cite{Bou02,BGL24+} for more details).

As was observed by Tosatti \cite{Tos16}, conjecture 1 also has applications to complex Monge-Amp\`ere equations on non-K\"ahler manifolds. Inspired by the work of Demailly-P\u{a}un \cite{DemPau04}, we see that the following conjecture of Demailly-P\u{a}un can be obtained by applying the Demailly-P\u{a}un's methods. Based on the work of Demailly \cite{Dem93} and Tosatti-Weinkove \cite{TosWei12}, it is interesting to attack Tosatti-Weinkove's conjecture.

\textbf{Conjecture 2:}(Demailly-P\u{a}un, \cite[Conjecture 0.8]{DemPau04}) Let $(X^n,\omega)$ be a compact Hermitian manifold and $\alpha$ a closed real $(1,1)$ form such that $[\alpha]$ is nef and $\int_X\alpha^n>0$. Then $[\alpha]$ is big.

The K\"ahler version of conjecture 2 was initially proved by Demailly-P\u{a}un through Demailly's mass concentration technique \cite{Dem93} and Yau's solution to Calabi conjecture \cite{Yau78}. In the case when $n=2$ and $\alpha$ is semi-positive, conjecture 2 was proved by the work of Buchdahl \cite{Buc99,Buc00} and Lamari \cite{Lam99}. For the case when $n=3,$ Chiose \cite[Theorem 4.1]{Chi16} has proved conjecture 2 under the assumption that there exists a pluriclosed Hermitian metric $\omega$ ($\partial\bar{\partial}\omega=0$). Later on, Popovici \cite[Theorem 1.1]{Pop16} made an observation (based on Xiao's approach \cite[Theorem 1.4]{Xiao}) that can help to simplify the some arguments in \cite{Chi16} and proved conjecture 2 under the assumption (\ref{pluriclosed+}). Nguyen \cite[Theorem 0.7]{Ngu16} used Popovici's observation and proved conjecture 2 assuming that $\alpha$ is semi-positive and that there exists a pluriclosed Hermitian metric. Recently, Li-Wang-Zhou \cite[Theorem 1.11]{LWZ24} relaxed the semi-positivity assumption of $\alpha$ to the assumption that there exists a bounded quasi-psh function $\rho$ such that $\alpha+\sqrt{-1}\partial\bar{\partial}\rho\ge0$ in the current sense. Guedj-Lu \cite{GueLu22,GueLu23} made a breakthrough by introducing bound mass property for the Hermitian metric (\cite{BGL24+}), then conjecture is valid. It is worth noting that all the aforementioned results can also be regarded as partial answers to a conjecture of Boucksom \cite{Bou02}. Recent advances towards a conjecture of Boucksom were recently made by Boucksom-Guedj-Lu \cite[Theorem C]{BGL24+}, they proved the Boucksom's conjecture if it has the bounded mass property.

\textbf{Conjecture 3:}(Tosatti-Weinkove, \cite{TosWei12}) Let $(X^n,\omega)$ be a compact Hermitian manifold and $\alpha$ a closed real $(1,1)$ form such that $[\alpha]$ is nef and $\int_X\alpha^n>0$. Fix $x_1,\cdots,x_m\in X$ and choose positive real numbers $\tau_i,\cdots,\tau_m$ such that
$$\sum_{i=1}^m\tau_i^n<\int_X\alpha^n.$$
Then there exists a $\alpha$-psh function $\phi$ with logarithmic poles at $x_1,\cdots,x_m:$
$$\phi(z)\le\tau_i\log|z|+O(1),$$
in a coordinate $(z_1,\cdots,z_n)$ at $x_i.$

In his seminal paper \cite{Dem93}, Demailly used a mass concentration technique for a degenerating family of complex Monge-Amp\`ere equations to produce $\alpha$-psh in a nef and big class $[\alpha]$ on a compact K\"ahler manifold which have nontrivial Lelong numbers at finitely many given points. This was later extended by Tosatti-Weinkove \cite[Main Theorem]{TosWei12} to non-K\"ahler manifolds and proved their conjecture for $n=2,3$ and they proved for general $n\ge4$ under the conditions that $X$ is Moishezon and $\alpha$ is rational. Significant advancements have been made more recently in this field. Tosatti \cite[Theorem 1.3]{Tos16} proved conjecture 3 when $\alpha$ is nef and big. Nguyen \cite[Theorem 0.6]{Ngu16} proved conjecture 3 under the assumption that $\alpha$ is semi-positive and $\int_X\alpha^n>0.$ Recently, Li-Wang-Zhou \cite[Theorem 1.10]{LWZ24} relaxed the semi-positivity assumption of $\alpha$ to the assumption that there exists a bounded quasi-psh function $\rho$ such that $\alpha+\sqrt{-1}\partial\bar{\partial}\rho\ge0$ in the current sense.

Now, we recall that a compact Hermitian manifold $(X,\omega)$ has bounded mass property if it satisfies
$$\overline{vol}(\omega):=\sup\big\{\int_X(\omega+\sqrt{-1}\partial\bar{\partial}f)^n;~~f\in C^{\infty}(X;\mathbb{R})~with~\omega+\sqrt{-1}\partial\bar{\partial}f>0\big\}<\infty.$$

The condition $\overline{vol}(\omega)<\infty$ is independent of the chioce of $\omega;$ it is moreover invariant under bimeromorphic change of coordinates \cite[Theoerm A]{GueLu22}. In particular, $\overline{vol}(\omega)<\infty$ if $X$ belongs to the Fujiki class. In recent years, the bounded mass property has been intensively studied by many authors, in relation to quasi-psh envelopes (see \cite{BGL24+,GueLu22,GueLu23} and the reference therein).

In this note, we show the following result.
\begin{theorem}\label{Kolo-To-ture}
The conjecture 1 is true if it has the bounded mass property.
\end{theorem}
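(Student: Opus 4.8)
The plan is to prove the two inequalities separately. The bound $\int_X\alpha^n\le\inf_{u}\int_{X(\alpha_u,0)}\alpha_u^n$, where $\alpha_u:=\alpha+\sqrt{-1}\partial\bar{\partial}u$, is Demailly's \cite{Dem10} and holds on every compact Hermitian manifold, so I would first recall its short proof and then devote all the work to the reverse inequality, which is where the bounded mass property is essential. Two elementary facts will be used repeatedly: since $\alpha$ and $\alpha_u$ are cohomologous closed forms, Stokes' theorem gives $\int_X\alpha_u^n=\int_X\alpha^n$; and the determinant of semipositive Hermitian $(1,1)$-forms is monotone, so $0\le A\le B$ implies $A^n\le B^n$ as $(n,n)$-forms.

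\emph{Step 1: reduction to masses of nonnegative forms.} By nefness, for each $\varepsilon\in(0,1]$ choose $u_\varepsilon\in C^\infty(X,\mathbb{R})$ with $\alpha_{u_\varepsilon}\ge-\varepsilon\omega$, and put $\beta_\varepsilon:=\alpha_{u_\varepsilon}+\varepsilon\omega\ge0$. On $X(\alpha_{u_\varepsilon},0)$ one has $0\le\alpha_{u_\varepsilon}\le\beta_\varepsilon$, hence $\alpha_{u_\varepsilon}^n\le\beta_\varepsilon^n$ there, while $\beta_\varepsilon^n\ge0$ everywhere; therefore
\[
\int_{X(\alpha_{u_\varepsilon},0)}\alpha_{u_\varepsilon}^n\ \le\ \int_{X(\alpha_{u_\varepsilon},0)}\beta_\varepsilon^n\ \le\ \int_X\beta_\varepsilon^n\ =\ \int_X(\alpha_{u_\varepsilon}+\varepsilon\omega)^n .
\]
Consequently it suffices to show $\limsup_{\varepsilon\to0^{+}}\int_X(\alpha_{u_\varepsilon}+\varepsilon\omega)^n\le\int_X\alpha^n$.

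\emph{Step 2: uniform mass bound from bounded mass, and conclusion.} Fix $C>0$ with $\alpha\le C\omega$ on $X$ and set $\tilde\omega_\varepsilon:=(C+2)\omega+\sqrt{-1}\partial\bar{\partial}u_\varepsilon$. For $\varepsilon\le1$ we have $\tilde\omega_\varepsilon-(\beta_\varepsilon+\omega)=(C+1-\varepsilon)\omega-\alpha\ge(1-\varepsilon)\omega\ge0$, so $0<\omega\le\beta_\varepsilon+\omega\le\tilde\omega_\varepsilon$; thus $\tilde\omega_\varepsilon$ is a Hermitian metric with $\tfrac{1}{C+2}\tilde\omega_\varepsilon=\omega+\sqrt{-1}\partial\bar{\partial}\tfrac{u_\varepsilon}{C+2}>0$, and by determinant monotonicity together with the bounded mass property,
\[
\int_X(\beta_\varepsilon+\omega)^n\ \le\ \int_X\tilde\omega_\varepsilon^n\ =\ (C+2)^n\int_X\Big(\omega+\sqrt{-1}\partial\bar{\partial}\tfrac{u_\varepsilon}{C+2}\Big)^n\ \le\ (C+2)^n\,\overline{vol}(\omega)=:M<\infty .
\]
Since each summand of $(\beta_\varepsilon+\omega)^n=\sum_{m=0}^n\binom nm\beta_\varepsilon^m\wedge\omega^{n-m}$ is a nonnegative measure, $\int_X\beta_\varepsilon^m\wedge\omega^{n-m}\le M$ for all $0\le m\le n$, uniformly in $\varepsilon$. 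Now expand, using $\int_X\alpha_{u_\varepsilon}^n=\int_X\alpha^n$:
\[
\int_X(\alpha_{u_\varepsilon}+\varepsilon\omega)^n=\int_X\alpha^n+\sum_{k=1}^n\binom nk\varepsilon^k\int_X\alpha_{u_\varepsilon}^{\,n-k}\wedge\omega^k .
\]
Writing $\alpha_{u_\varepsilon}=\beta_\varepsilon-\varepsilon\omega$ and expanding each $\alpha_{u_\varepsilon}^{\,n-k}\wedge\omega^k$, the uniform bound yields $\bigl|\int_X\alpha_{u_\varepsilon}^{\,n-k}\wedge\omega^k\bigr|\le(1+\varepsilon)^{n-k}M\le 2^nM$, so the correction term is at most $2^nM\bigl((1+\varepsilon)^n-1\bigr)$, which tends to $0$ as $\varepsilon\to0^{+}$. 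Hence $\int_X(\alpha_{u_\varepsilon}+\varepsilon\omega)^n\to\int_X\alpha^n$, and combining with Step 1 and Demailly's inequality gives the equality of the theorem.

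\emph{The main obstacle.} The single nontrivial ingredient is the uniform estimate $\int_X\beta_\varepsilon^m\wedge\omega^{n-m}\le M$: a priori these mixed masses may explode as $\varepsilon\to0$ because $\omega$ is not closed (precisely the pathology that obstructs mass comparisons on general Hermitian manifolds), and it is the bounded mass hypothesis that prevents this, exploited here through the simple domination of $\beta_\varepsilon+\omega$ by the Hermitian metric $\tilde\omega_\varepsilon=(C+2)\omega+\sqrt{-1}\partial\bar{\partial}u_\varepsilon$, whose total mass is bounded by $(C+2)^n\,\overline{vol}(\omega)$ independently of $\varepsilon$. All the remaining steps are bookkeeping with positivity of $(1,1)$-forms and Stokes' theorem.
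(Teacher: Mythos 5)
Your treatment of the inequality $\inf_{u}\int_{X(\alpha+\sqrt{-1}\partial\bar{\partial}u,0)}(\alpha+\sqrt{-1}\partial\bar{\partial}u)^n\le\int_X\alpha^n$ is correct and is essentially the paper's Step 1: test the infimum at $u=u_\varepsilon$, dominate the integrand by $(\alpha+\varepsilon\omega+\sqrt{-1}\partial\bar{\partial}u_\varepsilon)^n\ge0$, and control the mixed masses uniformly in $\varepsilon$ via the bounded mass property. Your bookkeeping (working with $\beta_\varepsilon\ge0$ and dominating $\beta_\varepsilon+\omega$ by the Hermitian metric $(C+2)\omega+\sqrt{-1}\partial\bar{\partial}u_\varepsilon$) is in fact slightly cleaner than the paper's, which applies determinant monotonicity to the form $\alpha+\sqrt{-1}\partial\bar{\partial}u_\varepsilon$ that is only bounded below by $-\varepsilon\omega$.

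The genuine gap is the other half. You dismiss $\int_X\alpha^n\le\inf_{u}\int_{X(\alpha_u,0)}\alpha_u^n$ as Demailly's inequality, ``valid on every compact Hermitian manifold, with a short proof to be recalled,'' and never supply an argument. This direction is not a formality: even in the K\"ahler case it does not follow from Stokes' theorem, because on the locus where $\alpha_u$ has an even positive number of negative eigenvalues one has $\alpha_u^n>0$, so the contribution of $X\setminus X(\alpha_u,0)$ cannot simply be discarded from $\int_X\alpha_u^n=\int_X\alpha^n$. The proof goes through the obstacle-problem machinery: one sets $\beta=\alpha+\sqrt{-1}\partial\bar{\partial}u$, forms the envelope $h_\varepsilon=\sup\{\varphi\in psh(X,\beta+\varepsilon\omega),\ \varphi\le0\}$ (nonempty by nefness), invokes its $C^{1,1}$ regularity and Berman's theorem that $(\beta+\varepsilon\omega+\sqrt{-1}\partial\bar{\partial}h_\varepsilon)^n$ is concentrated on $\{h_\varepsilon=0\}$, where it equals $(\beta+\varepsilon\omega)^n\ge0$, whence $\int_X(\beta+\varepsilon\omega+\sqrt{-1}\partial\bar{\partial}h_\varepsilon)^n\le\int_{X(\beta+\varepsilon\omega,0)}(\beta+\varepsilon\omega)^n$. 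Crucially, on a general Hermitian manifold one then needs the bounded mass property \emph{again} to show that the left-hand side equals $\int_X\alpha^n+O(\varepsilon)$, since $h_\varepsilon$ varies with $\varepsilon$ and the mixed masses $\int_X(\alpha+\sqrt{-1}\partial\bar{\partial}(u+h_\varepsilon))^{n-k}\wedge\omega^k$ must be bounded uniformly; this is exactly the content of the paper's Step 2, and it is not an unconditional Hermitian statement one can cite and skip. As written, your proposal establishes only one of the two inequalities of the claimed equality.
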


\begin{theorem}\label{DePa-ture}
The conjecture 2 is true if it has the bounded mass property. In particular, the conjecture 3 is true if it has the bounded mass property.
\end{theorem}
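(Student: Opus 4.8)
The plan is to deduce both statements from Theorem~\ref{Kolo-To-ture} by running the Demailly-P\u{a}un mass concentration scheme \cite{Dem93,DemPau04} in the Hermitian bounded-mass setting, following the non-K\"ahler developments of Tosatti-Weinkove \cite{TosWei12}, Chiose \cite{Chi16}, Popovici \cite{Pop16}, Nguyen \cite{Ngu16}, Li-Wang-Zhou \cite{LWZ24}, Guedj-Lu \cite{GueLu22,GueLu23} and Boucksom-Guedj-Lu \cite{BGL24+}. The role of Theorem~\ref{Kolo-To-ture} is to provide a uniform lower bound on the relevant Monge-Amp\`ere masses; over a K\"ahler manifold such a bound is automatic because those masses are cohomological, and over a general Hermitian manifold it is exactly the missing ingredient.

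\emph{Conjecture 2.} Let $[\alpha]$ be nef with $\int_X\alpha^n>0$. For each $\varepsilon>0$ choose $u_\varepsilon\in C^\infty(X,\R)$ with $\gamma_\varepsilon:=\alpha+\sqrt{-1}\partial\bar\partial u_\varepsilon\ge-\varepsilon\omega$, so that $\gamma_\varepsilon$ is a smooth representative of $[\alpha]$ and $\beta_\varepsilon:=\gamma_\varepsilon+2\varepsilon\omega\ge\varepsilon\omega$ is a Hermitian metric. By Theorem~\ref{Kolo-To-ture} applied to $u_\varepsilon$,
\begin{equation*}
\int_{X(\gamma_\varepsilon,0)}\gamma_\varepsilon^n\ \ge\ \int_X\alpha^n .
\end{equation*}
On $X(\gamma_\varepsilon,0)$ one has $0\le\gamma_\varepsilon\le\beta_\varepsilon$, hence $\gamma_\varepsilon^n\le\beta_\varepsilon^n$ there, while $\beta_\varepsilon^n>0$ on all of $X$; therefore
\begin{equation*}
\int_X\beta_\varepsilon^n\ \ge\ \int_{X(\gamma_\varepsilon,0)}\beta_\varepsilon^n\ \ge\ \int_{X(\gamma_\varepsilon,0)}\gamma_\varepsilon^n\ \ge\ \int_X\alpha^n\ >\ 0 .
\end{equation*}
(If one prefers, first replace $\beta_\varepsilon$ by the Tosatti-Weinkove solution $\beta_\varepsilon+\sqrt{-1}\partial\bar\partial\varphi_\varepsilon>0$ of $(\beta_\varepsilon+\sqrt{-1}\partial\bar\partial\varphi_\varepsilon)^n=c_\varepsilon\omega^n$, which leaves the two displays unchanged.) Thus, from a smooth representative of $[\alpha]$ and the perturbation by $2\varepsilon\omega$, one has produced for every $\varepsilon>0$ a Hermitian metric $\beta_\varepsilon$ with $\int_X\beta_\varepsilon^n\ge\int_X\alpha^n$.

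\emph{Conclusion.} The bounded mass property forces $\sup_{0<\varepsilon\le1}\int_X\beta_\varepsilon^n<\infty$, so, after normalizing the potentials $u_\varepsilon+\varphi_\varepsilon$ and letting $\varepsilon\to0$, the lower volume theory of \cite{BGL24+} turns these uniform two-sided mass bounds into $\underline{vol}([\alpha])\ge\int_X\alpha^n>0$; since by \cite[Theorem 3.20]{BGL24+} the lower volume vanishes outside the big cone, $[\alpha]$ is big, which is Conjecture~2. (Alternatively one reaches this by contradiction as in Chiose \cite{Chi16}, Popovici \cite{Pop16} and Nguyen \cite{Ngu16}: were $[\alpha]$ not big, Lamari duality would furnish Gauduchon metrics $\eta_\varepsilon$ with $\int_X\alpha\wedge\eta_\varepsilon^{n-1}\to0$, and feeding the bound $\int_X\beta_\varepsilon^n\ge\int_X\alpha^n$ and the bounded mass a priori estimates into the Monge-Amp\`ere solutions paired against $\eta_\varepsilon^{n-1}$ would contradict $\int_X\alpha^n>0$.) Finally, once $[\alpha]$ is known to be nef and big, the existence of an $\alpha$-psh function with the prescribed logarithmic poles at $x_1,\dots,x_m$ whenever $\sum_{i=1}^m\tau_i^n<\int_X\alpha^n$ is exactly Tosatti's \cite[Theorem 1.3]{Tos16}, which gives Conjecture~3.

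\emph{Main obstacle.} The delicate step is the passage $\varepsilon\to0$: on a non-K\"ahler manifold $\int_X\beta_\varepsilon^n$ is not a cohomological invariant, and a weak limit of the closed representatives $\gamma_\varepsilon$ (or of $\gamma_\varepsilon+\sqrt{-1}\partial\bar\partial\varphi_\varepsilon$) is only a closed positive current in $[\alpha]$, with no a priori control of mass loss. Extracting from this construction an honest K\"ahler current in $[\alpha]$ (equivalently, the estimate $\underline{vol}([\alpha])\ge\int_X\alpha^n$) is the substantive point; this is exactly where the bounded mass property is indispensable, underlying both the Guedj-Lu a priori estimates \cite{GueLu22,GueLu23} and the lower volume results of \cite{BGL24+}, and where Theorem~\ref{Kolo-To-ture} provides the replacement for the cohomological invariance of Monge-Amp\`ere masses available over K\"ahler manifolds.
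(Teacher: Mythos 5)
Your reduction to Theorem \ref{Kolo-To-ture} does produce a correct (and correctly derived) lower bound $\int_X\beta_\varepsilon^n\ge\int_X\alpha^n>0$ for the Hermitian metrics $\beta_\varepsilon\in[\alpha]+2\varepsilon[\omega]$, but the proof stops exactly where the theorem begins. The step you label \emph{Conclusion} --- that ``the lower volume theory of \cite{BGL24+} turns these uniform two-sided mass bounds into $\underline{vol}([\alpha])\ge\int_X\alpha^n$'' --- is not an argument: you do not say which statement of \cite{BGL24+} applies, and the obvious candidates do not apply directly. A weak limit $T$ of the $\beta_\varepsilon$ (or of the normalized Monge--Amp\`ere solutions) is a closed positive current in $[\alpha]$, but its Monge--Amp\`ere mass can drop in the limit, so it cannot simply be fed into \cite[Theorem C]{BGL24+}; and a lower bound on $\int_X\beta_\varepsilon^n$ for the \emph{perturbed} classes $[\alpha]+2\varepsilon[\omega]$ does not by itself bound any volume of $[\alpha]$ without a semicontinuity statement that you neither formulate nor prove. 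You in fact flag this passage yourself as ``the substantive point'' and ``the delicate step'' --- that is, you name the gap rather than fill it. The parenthetical alternative (Lamari duality plus a contradiction obtained by ``pairing against $\eta_\varepsilon^{n-1}$'') is the right idea but remains a one-sentence sketch.

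The paper closes exactly this gap, by a route that does not use Theorem \ref{Kolo-To-ture} at all. It invokes Lamari's duality (Lemma \ref{lamari99}) to reduce bigness to the uniform inequality \eqref{lamari-inequality} over all Gauduchon metrics $\omega_G$; solves the Tosatti--Weinkove equation \eqref{cma+} with right-hand side proportional to $\omega\wedge\omega_G^{n-1}$; applies Popovici's Cauchy--Schwarz trick to obtain $\int_X\widetilde{\alpha_{\varepsilon}}\wedge\omega_G^{n-1}\big/\int_X\omega\wedge\omega_G^{n-1}\ \ge\ \int_X\widetilde{\alpha_{\varepsilon}}^n\big/\bigl(n\int_X\omega\wedge\widetilde{\alpha_{\varepsilon}}^{n-1}\bigr)$; and then uses the bounded mass property to establish the quantitative estimates $\int_X\widetilde{\alpha_{\varepsilon}}^n=\int_X\alpha^n+O(\varepsilon)$ and $n\varepsilon\int_X\omega\wedge\widetilde{\alpha_{\varepsilon}}^{n-1}=O(\varepsilon)$, which together yield \eqref{lamari-inequality} for $\varepsilon$ small. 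These estimates on the top and mixed Monge--Amp\`ere masses of $\widetilde{\alpha_{\varepsilon}}$ are the actual content of the proof and are entirely absent from your proposal. Your final step --- once $[\alpha]$ is nef and big, Conjecture 3 follows from \cite[Theorem 1.3]{Tos16} --- does coincide with the paper's Step 3 and is fine.
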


\section{Proof of the main result}
\subsection{Proof of Theorem \ref{Kolo-To-ture}} We now present a proof of this result following the argument of Ko{\l}odziej-Tosatti \cite{KolTos21}. The proof is divided into two steps. Let $(n,k)=\frac{n!}{k!(n-k)!}.$\\
\textbf{Step 1.} Since $[\alpha]$ is nef, then for every $\varepsilon>0$ there exists $u_{\varepsilon}\in C^{\infty}(X,\mathbb{R})$ such that $$\alpha+\sqrt{-1}\partial\bar{\partial}u_{\varepsilon}\ge-\varepsilon\omega.$$
So, $$X(\alpha+\sqrt{-1}\partial\bar{\partial}u_{\varepsilon}+\varepsilon\omega,0)=X.$$
By rescaling, we can assume that $\alpha\le\omega.$ Therefore, for any smooth function $$u\in psh(X,\alpha+\varepsilon\omega)\subset psh(X,2\omega)$$
we apply the bounded mass property and Stokes' theorem to have
\begin{align}\label{alpha+e-ub1}
\nonumber&\inf_{u\in C^{\infty}(X,\mathbb{R})}\int_{X(\alpha+\varepsilon\omega+\sqrt{-1}\partial\bar{\partial}u,0)}\big(\alpha+\varepsilon\omega+\sqrt{-1}\partial\bar{\partial}u\big)^n\\
\nonumber
\le&\int_X\big(\alpha+\varepsilon\omega+\sqrt{-1}\partial\bar{\partial}u_{\varepsilon}\big)^n\\ \nonumber
=&\int_X\big(\alpha+\sqrt{-1}\partial\bar{\partial}u_{\varepsilon}\big)^n+\sum_{k=1}^n(n,k)\varepsilon^k\int_X\big(\alpha+\sqrt{-1}\partial\bar{\partial}u_{\varepsilon}\big)^{n-k}\wedge\omega^k\\ \nonumber
=&\int_X\alpha^n+\sum_{k=1}^n(n,k)\varepsilon^k\int_X\big(\alpha-2\omega+2\omega+\sqrt{-1}\partial\bar{\partial}u_{\varepsilon}\big)^{n-k}\wedge\omega^k\\ \nonumber
\leq&\int_X\alpha^n+\sum_{k=1}^n(n,k)\varepsilon^k\int_X\big(2\omega+\sqrt{-1}\partial\bar{\partial}u_{\varepsilon}\big)^{n-k}\wedge\omega^k\\
=&\int_X\alpha^n+O(\varepsilon).
\end{align}
On the other hand, given any $u\in C^{\infty}(X,\mathbb{R}),$ we have
$$X(\alpha+\sqrt{-1}\partial\bar{\partial}u,0)\subset X(\alpha+\varepsilon\omega+\sqrt{-1}\partial\bar{\partial}u,0)$$
and for any $x\in X(\alpha+\sqrt{-1}\partial\bar{\partial}u,0)$
$$\big(\alpha+\sqrt{-1}\partial\bar{\partial}u\big)^n(x)\le\big(\alpha+\varepsilon\omega+\sqrt{-1}\partial\bar{\partial}u\big)^n(x).$$
So, combining (\ref{alpha+e-ub1}) with above, we have
\begin{align}\label{alpha+e-ub2}
\nonumber&\inf_{u\in C^{\infty}(X,\mathbb{R})}\int_{X(\alpha+\sqrt{-1}\partial\bar{\partial}u,0)}\big(\alpha+\sqrt{-1}\partial\bar{\partial}u\big)^n\\
\nonumber
\le&\inf_{u\in C^{\infty}(X,\mathbb{R})}\int_{X(\alpha+\varepsilon\omega+\sqrt{-1}\partial\bar{\partial}u,0)}\big(\alpha+\varepsilon\omega+\sqrt{-1}\partial\bar{\partial}u\big)^n\\
\le&\int_X\alpha^n+O(\varepsilon).
\end{align}
Letting $\varepsilon\to0,$ we have
\begin{equation}\label{alpha+energy-lb}
\int_X\alpha^n\ge \inf_{u\in C^{\infty}(X,\mathbb{R})}\int_{X(\alpha+\sqrt{-1}\partial\bar{\partial}u,0)}\big(\alpha+\sqrt{-1}\partial\bar{\partial}u\big)^n.
\end{equation}
\textbf{Step 2.} To prove the converse inequality:
\begin{equation}\label{alpha+energy-ub}
\int_X\alpha^n\le \inf_{u\in C^{\infty}(X,\mathbb{R})}\int_{X(\alpha+\sqrt{-1}\partial\bar{\partial}u,0)}\big(\alpha+\sqrt{-1}\partial\bar{\partial}u\big)^n.
\end{equation}
Fix $u\in C^{\infty}(X,\mathbb{R}),$ let $\beta=\alpha+\sqrt{-1}\partial\bar{\partial}u.$ We consider the envelpope
\begin{align*}
h_{\varepsilon}(x):&=\sup\{\varphi(x)\in psh(X,\beta+\varepsilon\omega),\varphi\le0\}\\
&=-u+u_{\varepsilon}+\sup\{\varphi(x)\in psh(X,\alpha+\varepsilon\omega+\sqrt{-1}\partial\bar{\partial}u_{\varepsilon}),\varphi\le u-u_{\varepsilon}\}
\end{align*}
which satisfies $h_{\varepsilon}\in C^{1,1}(X)$ (See \cite{ChuZ19,Tos18}). In particular,
$$\big(\beta+\varepsilon\omega+\sqrt{-1}\partial\bar{\partial}h_{\varepsilon}\big)^n=0,~~~on~~~\{h_{\varepsilon}<0\}.$$
Hence, applying above inequality and Proposition 3.1 (iii) in \cite{Ber09}, we have
\begin{align}\label{alpha+e-lb1}
\nonumber\int_X\big(\beta+\varepsilon\omega+\sqrt{-1}\partial\bar{\partial}h_{\varepsilon}\big)^n&=\int_{h_{\varepsilon}=0}\big(\beta+\varepsilon\omega\big)^n\\
&\le\int_{X(\beta+\varepsilon\omega,0)}\big(\beta+\varepsilon\omega\big)^n.
\end{align}
Noting that
\begin{align}\label{alpha+e-lb2}
\lim_{\varepsilon\to0}\int_{X(\beta+\varepsilon\omega,0)}\big(\beta+\varepsilon\omega\big)^n=\int_{X(\beta,0)}\beta^n
\end{align}
Next, we show that
\begin{align}\label{alpha+e-lb3}
\limsup_{\varepsilon\to0}\int_X\big(\beta+\varepsilon\omega+\sqrt{-1}\partial\bar{\partial}h_{\varepsilon}\big)^n\ge\int_X\alpha^n.
\end{align}
Indeed, similar calculation of (\ref{alpha+e-ub1}), we have
\begin{align*}
\nonumber\int_X\big(\beta+\varepsilon\omega+\sqrt{-1}\partial\bar{\partial}h_{\varepsilon}\big)^n
=&\int_X\big(\alpha+\varepsilon\omega+\sqrt{-1}\partial\bar{\partial}(h_{\varepsilon}+u)\big)^n\\
=&\int_X\alpha^n+O(\varepsilon).
\end{align*}
Combining (\ref{alpha+e-lb1}), (\ref{alpha+e-lb2}) with (\ref{alpha+e-lb3}), by letting $\varepsilon\to0,$ the inequality (\ref{alpha+energy-ub}) is proved. From above, this completes the proof.\qed

\subsection{Proof of Theorem \ref{DePa-ture}}
We can now give the proof of Theorem \ref{DePa-ture}. We will follow closely Popovici's arguments \cite{Pop16} (see \cite[p.398-399]{Tos16}). The starting point for our discussion is the following lemma by Lamari in \cite[Lemma 3.3]{Lam99}(see \cite[Lemma 3.3]{Tos16} simple proof).
\begin{lemma}[Lamari, Lemma 3.3 in \cite{Lam99}]\label{lamari99}
Let $(X^n,\omega)$ be a compact Hermitian manifold and $\alpha$ a real $(1,1)$ form on $X$. Given $\varepsilon\geq0,$ there exists a current of the form $T=\alpha+\sqrt{-1}\partial\bar{\partial}u\geq\varepsilon\omega$ if and only if
$$\int_X\alpha\wedge\omega_G^{n-1}\geq\varepsilon\int_X\omega\wedge\omega_G^{n-1}$$
holds for all Gauduchon metrics $\omega_G$ on $X$.
\end{lemma}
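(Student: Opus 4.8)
The plan is to prove this by the Hahn--Banach duality argument of Lamari, carried out in the space of currents.

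The \emph{only if} direction is immediate: if $T=\alpha+\sqrt{-1}\partial\bar{\partial}u\ge\varepsilon\omega$, then every Gauduchon metric satisfies $\partial\bar{\partial}(\omega_G^{n-1})=0$, so Stokes' theorem gives $\int_X\sqrt{-1}\partial\bar{\partial}u\wedge\omega_G^{n-1}=\int_X u\,\sqrt{-1}\partial\bar{\partial}(\omega_G^{n-1})=0$, and integrating the inequality $T\wedge\omega_G^{n-1}\ge\varepsilon\,\omega\wedge\omega_G^{n-1}$ over $X$ yields $\int_X\alpha\wedge\omega_G^{n-1}\ge\varepsilon\int_X\omega\wedge\omega_G^{n-1}$. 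So the content lies in the \emph{if} direction. There I would first reduce to $\varepsilon=0$: since $\alpha+\sqrt{-1}\partial\bar{\partial}u\ge\varepsilon\omega$ is the same as $(\alpha-\varepsilon\omega)+\sqrt{-1}\partial\bar{\partial}u\ge0$, after replacing $\alpha$ by $\alpha-\varepsilon\omega$ it suffices to show: if $\int_X\alpha\wedge\omega_G^{n-1}\ge0$ for all Gauduchon $\omega_G$, then $\alpha$ lies in the cone
\[
\mathcal{C}:=\{\,T+\sqrt{-1}\partial\bar{\partial}u\ :\ T\ \text{a positive}\ (1,1)\text{-current},\ u\in\mathcal{D}'(X)\,\}
\]
sitting inside the space of real $(1,1)$-currents equipped with its weak-$*$ topology.

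The first genuine step is to verify that $\mathcal{C}$ is weak-$*$ closed. Fixing, by Gauduchon's theorem, a Gauduchon metric $\omega$ on $X$, I would argue: if $\alpha_j:=T_j+\sqrt{-1}\partial\bar{\partial}u_j\to\alpha$ with $T_j\ge0$ and the normalization $\int_X u_j\,\omega^n=0$, then pairing with $\omega^{n-1}$ and using $\partial\bar{\partial}(\omega^{n-1})=0$ shows $\int_X T_j\wedge\omega^{n-1}=\int_X\alpha_j\wedge\omega^{n-1}$ stays bounded, so the masses of the $T_j$ are uniformly bounded; moreover $\sqrt{-1}\partial\bar{\partial}u_j=\alpha_j-T_j\le C\omega$, so the $-u_j$ form a normalized family of quasi-psh functions, which is $L^1$-compact. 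Along a subsequence, $T_j\to T\ge0$ and $u_j\to u$ in $L^1$, whence $\alpha=T+\sqrt{-1}\partial\bar{\partial}u\in\mathcal{C}$. Thus $\mathcal{C}$ is a closed convex cone.

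Then I would run the separation. If $\alpha\notin\mathcal{C}$, Hahn--Banach produces a smooth real $(n-1,n-1)$-form $\Theta$ (these represent the dual of $(1,1)$-currents via $S\mapsto\int_X S\wedge\Theta$) with $\int_X\alpha\wedge\Theta<0\le\int_X R\wedge\Theta$ for all $R\in\mathcal{C}$. Testing against $R=f\,\sqrt{-1}\gamma\wedge\bar\gamma$ with $f\ge0$ smooth and $\gamma$ an arbitrary $(1,0)$-form forces $\Theta$ to be a positive $(n-1,n-1)$-form (weak and strong positivity coincide in degree $n-1$), and testing against $R=\pm\sqrt{-1}\partial\bar{\partial}v$ with $v\in C^{\infty}(X,\mathbb{R})$ forces $\partial\bar{\partial}\Theta=0$. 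To turn $\Theta$ into a Gauduchon form, for $\delta>0$ I consider $\Theta_\delta:=\Theta+\delta\,\omega^{n-1}$, which is positive definite and $\partial\bar{\partial}$-closed; by Michelsohn's $(n-1)$-st root there is a unique positive $(1,1)$-form $\omega_{G,\delta}$ with $\omega_{G,\delta}^{n-1}=\Theta_\delta$, and $\partial\bar{\partial}\omega_{G,\delta}^{n-1}=0$ makes $\omega_{G,\delta}$ a Gauduchon metric. The hypothesis then gives $0\le\int_X\alpha\wedge\omega_{G,\delta}^{n-1}=\int_X\alpha\wedge\Theta+\delta\int_X\alpha\wedge\omega^{n-1}$, and letting $\delta\to0$ contradicts $\int_X\alpha\wedge\Theta<0$; hence $\alpha\in\mathcal{C}$, which is the asserted current. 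I expect the closedness of $\mathcal{C}$ to be the main obstacle: it rests on the uniform mass bound for the approximating currents $T_j$, and this is precisely the point where fixing a Gauduchon background metric is indispensable, so that $\sqrt{-1}\partial\bar{\partial}$-exact terms drop out upon pairing with $\omega^{n-1}$; the separation and the passage through Michelsohn's root are then routine.
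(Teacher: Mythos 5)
The paper does not actually prove this lemma: it quotes it from Lamari \cite{Lam99} and points to \cite{Tos16} for a proof. Your Hahn--Banach duality argument is precisely the strategy of those sources, and most of it is sound: the ``only if'' direction via Stokes against the $\partial\bar{\partial}$-closed form $\omega_G^{n-1}$, the reduction to $\varepsilon=0$, the separation of $\alpha$ from the cone $\mathcal{C}$, the identification of the separating functional as a positive $\partial\bar{\partial}$-closed $(n-1,n-1)$-form, and the perturbation $\Theta+\delta\,\omega^{n-1}$ followed by Michelsohn's $(n-1)$-st root are all correct.

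There is, however, a genuine gap in your proof that $\mathcal{C}$ is weak-$*$ closed, and you rightly identified this as the crux. From $T_j\ge 0$ you only get $\sqrt{-1}\partial\bar{\partial}u_j=\alpha_j-T_j\le\alpha_j$, and a weak-$*$ convergent sequence $\alpha_j\in\mathcal{C}$ satisfies no uniform upper bound $\alpha_j\le C\omega$ in the sense of currents: elements of $\mathcal{C}$ are arbitrary currents $T+\sqrt{-1}\partial\bar{\partial}u$, and $\sqrt{-1}\partial\bar{\partial}u$ is in general unbounded above, while weak-$*$ convergence controls pairings, not order. So the claim that the $-u_j$ are quasi-psh with a uniform lower Hessian bound, and the ensuing appeal to $L^1$-compactness of normalized quasi-psh families, does not go through. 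The repair is standard and stays within your framework: your mass bound (pairing with the Gauduchon background $\omega^{n-1}$) does give $T_j\to T\ge 0$ along a subsequence, hence $\sqrt{-1}\partial\bar{\partial}u_j\to\alpha-T$ weakly; each $\sqrt{-1}\partial\bar{\partial}u_j$ annihilates every smooth $\partial\bar{\partial}$-closed real $(n-1,n-1)$-form, hence so does $\alpha-T$, and the image of $\sqrt{-1}\partial\bar{\partial}$ on distributions is exactly the annihilator of $\ker\partial\bar{\partial}$ (equivalently, it is weak-$*$ closed), by elliptic theory and the finite-dimensionality of the Aeppli cohomology $H^{n-1,n-1}_{A}(X)$. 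Therefore $\alpha-T=\sqrt{-1}\partial\bar{\partial}u$ and $\alpha\in\mathcal{C}$. (A secondary, routine point you gloss over: weak-$*$ closedness of a convex set is not sequential closedness; one either argues with nets or invokes Banach--Dieudonn\'e/Krein--\v{S}mulian to reduce to equicontinuous subsets, where the same mass-bound argument applies.)
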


Since $[\alpha]$ is nef, then for every $\varepsilon>0$ there exists $u_{\varepsilon}\in C^{\infty}(X,\mathbb{R})$ such that $$\alpha_{\varepsilon}:=\alpha+\varepsilon\omega+\sqrt{-1}\partial\bar{\partial}u_{\varepsilon}>0.$$
By Lamari's Lemma \ref{lamari99}, it suffices to show that there exist $\varepsilon,\delta>0$ such that for any Gauduchon metric $\omega_G$ on $X$ we have
\begin{equation}\label{lamari-inequality}
\int_X\alpha_{\varepsilon}\wedge\omega_G^{n-1}\geq\varepsilon(1+\delta)\int_X\omega\wedge\omega_G^{n-1}.
\end{equation}
In the following, we divide the proof of (\ref{lamari-inequality}) into three steps.\\
\textbf{Step 1.} Thanks to Tosatti-Weinkove \cite{TosWei10}, we can find $\psi_{\varepsilon}\in C^{\infty}(X;\mathbb{R})$ solving
\begin{equation}\label{cma+}
  \begin{cases}
  \big(\alpha_{\varepsilon}+\sqrt{-1}\partial\bar{\partial}\psi_{\varepsilon}\big)^n&=e^{b_{\varepsilon}}\frac{\omega\wedge\omega_G^{n-1}}{\int_X\omega\wedge\omega_G^{n-1}},\\
  \widetilde{\alpha_{\varepsilon}}&=\alpha_{\varepsilon}+\sqrt{-1}\partial\bar{\partial}\psi_{\varepsilon}
  \end{cases}
\end{equation}
where $b_{\varepsilon}\in\mathbb{R}.$ Noting that
\begin{align*}
e^{b_{\varepsilon}}=\int_X\widetilde{\alpha_{\varepsilon}}^n&=e^{\frac{b_{\varepsilon}^n}{2}}\Big(\int_X\big(\frac{\widetilde{\alpha_{\varepsilon}}^n}{\omega_G^n}\big)^{\frac{1}{2}}\big(\frac{\omega\wedge\omega_G^{n-1}}{\omega_G^n}\big)^{\frac{1}{2}}\omega_G^n\Big)\big(\int_X\omega\wedge\omega_G^{n-1}\big)^{-\frac{1}{2}}\\
&=\frac{e^{\frac{b_{\varepsilon}}{2}}}{\sqrt{n}}\Big(\int_X\big(\frac{\widetilde{\alpha_{\varepsilon}}^n}{\omega_G^n}\big)^{\frac{1}{2}}\big(tr_{\omega_G}\omega\big)^{\frac{1}{2}}\omega_G^n\Big)\big(\int_X\omega\wedge\omega_G^{n-1}\big)^{-\frac{1}{2}}.
\end{align*}
Hence,
\begin{align*}
e^{b_{\varepsilon}}&=\frac{1}{n}\Big(\int_X\big(\frac{\widetilde{\alpha_{\varepsilon}}^n}{\omega_G^n}\big)^{\frac{1}{2}}\big(tr_{\omega_G}\omega\big)^{\frac{1}{2}}\omega_G^n\Big)^2\big(\int_X\omega\wedge\omega_G^{n-1}\big)^{-1}\\
&\le\frac{1}{n}\Big(\int_X\big(\frac{\widetilde{\alpha_{\varepsilon}}^n}{\omega_G^n}tr_{\widetilde{\alpha_{\varepsilon}}}\omega\big)^{\frac{1}{2}}\big(tr_{\omega_G}\widetilde{\alpha_{\varepsilon}}\big)^{\frac{1}{2}}\omega_G^n\Big)^2\big(\int_X\omega\wedge\omega_G^{n-1}\big)^{-1}\\
&\le\frac{1}{n}\Big(\int_X\big(tr_{\widetilde{\alpha_{\varepsilon}}}\omega\big)\widetilde{\alpha_{\varepsilon}}^n\Big)\Big(\int_X\big(tr_{\omega_G}\widetilde{\alpha_{\varepsilon}}\big)\omega_G^n\Big)\big(\int_X\omega\wedge\omega_G^{n-1}\big)^{-1}\\
&=n\Big(\int_X\omega\wedge\widetilde{\alpha_{\varepsilon}}^{n-1}\big)\Big(\int_X\widetilde{\alpha_{\varepsilon}}\wedge\omega_G^{n-1}\Big)\big(\int_X\omega\wedge\omega_G^{n-1}\big)^{-1}
\end{align*}
that is,
\begin{align}
\frac{\int_X\alpha_{\varepsilon}\wedge\omega_G^{n-1}}{\varepsilon\int_X\omega\wedge\omega_G^{n-1}}=\frac{\int_X\widetilde{\alpha_{\varepsilon}}\wedge\omega_G^{n-1}}{\varepsilon\int_X\omega\wedge\omega_G^{n-1}}\geq\frac{\int_X\widetilde{\alpha_{\varepsilon}}^n}{n\varepsilon\int_X\omega\wedge\widetilde{\alpha_{\varepsilon}}^{n-1}}
\end{align}
\textbf{Step 2.} There exist $\varepsilon,\delta'>0$ independent of $\omega_G$ such that
\begin{equation}\label{lamari-2}
n\varepsilon\int_X\omega\wedge\widetilde{\alpha_{\varepsilon}}^{n-1}\le (1-\delta')\int_X\widetilde{\alpha_{\varepsilon}}^n.
\end{equation}
Indeed, similar calculation of (\ref{alpha+e-ub1}), we have
\begin{align}\label{alpha-e1}
\nonumber\int_X\widetilde{\alpha_{\varepsilon}}^n&=\int_X\big(\alpha+\sqrt{-1}\partial\bar{\partial}(u_{\varepsilon}+\psi_{\varepsilon})+\varepsilon\omega\big)^n\\
&=...=\int_X\alpha^n+O(\varepsilon).
\end{align}
On the other hand, by the bounded mass property and Stokes' formula again, we have
\begin{align}\label{alpha-e2}
\nonumber n\varepsilon\int_X\omega\wedge\widetilde{\alpha_{\varepsilon}}^{n-1}&=n\varepsilon\int_X\omega\wedge\big(\alpha+\varepsilon\omega+\sqrt{-1}\partial\bar{\partial}(u_{\varepsilon}+\psi_{\varepsilon})\big)^{n-1}\\
\nonumber&=n\varepsilon\int_X\omega_X\wedge\alpha^{n-1}+n\varepsilon\sum_{k=1}^{n-2}\int_X\omega\wedge^{n-k-1}\wedge\big(\varepsilon\omega+\sqrt{-1}\partial\bar{\partial}(u_{\varepsilon}+\psi_{\varepsilon})\big)^k\\
&=n\varepsilon\int_X\omega_X\wedge\alpha^{n-1}+O(\varepsilon^2).
\end{align}
Case 1: $\int_X\omega_X\wedge\alpha^{n-1}\le0$. By (\ref{alpha-e1}) and (\ref{alpha-e2}), (\ref{lamari-2}) is obvious.\\
Case 2: $\int_X\omega_X\wedge\alpha^{n-1}\ge0$. If we choose $0<\varepsilon\ll\frac{\int_X\alpha^n}{2n\int_X\omega\wedge\alpha^{n-1}},$ then by (\ref{alpha-e1}) and (\ref{alpha-e2}), (\ref{lamari-2}) is valid. From above, the inequality (\ref{lamari-inequality}) is proved. \\
\textbf{Step 3.} To prove conjecture 3. Indeed, by above, we see that if $\alpha$ is nef and $\int_X\alpha^n>0,$ then $\alpha$ is big. Hence, applying \cite[Theorem 1.3]{Tos16}, we finish the conjecture 3. For more details, we refer the reader's see \cite{Dem93,TosWei12,Tos16} and the reference therein.

From above, this completes the proof of conjecture 2 and conjecture 3. \qed

\section{Further Discussion}
As remarked above, we will follow closely Demailly-P\u{a}un's arguments \cite{DemPau04} and Tosatti-Weinkove's arguments \cite{TosWei12} ( cf.\cite{Tos16}) to give an alternative proof of Theorem \ref{DePa-ture}. For reader's convenience, we recall Demailly-P\u{a}un's concentration of mass for nef classes of positive self-intersection. 
\begin{lemma}(Demailly-P\u{a}un, \cite{DemPau04}, Boucksom, \cite{Bou02})\label{DP-mass-cen}
Let $(X^n,\omega)$ be a compact Hermitian manifold and let $Y\subset X$ be an analytic subset of $X$. Then there exist globally defined quasi-psh function $\psi$ with analytic singularities along $Y$ and a sequence of smooth functions $\big(\psi_{\varepsilon}\big)_{\varepsilon\in(0,1]}$ on $X$ decreasing pointwise to $\psi$ such that
\begin{equation}\label{mass-cen-0}
\omega_{\varepsilon}:=\omega+\sqrt{-1}\partial\bar{\partial}\psi_{\varepsilon}\geq\frac{1}{2}\omega
\end{equation}
and
\begin{equation}\label{mass-cen-1}
\int_{U\cap V_{\varepsilon}}\omega_{\varepsilon}^p\wedge\omega^{n-p}\geq\delta_p(U)>0
\end{equation}
in any neighborhood $U$ of a regular point $x_0\in Y,$ where $V_{\varepsilon}=\{z\in X;~\psi(z)<\log\varepsilon\}$ is the "tubular neighborhood" of radius $\varepsilon$ around $Y$. In particular, for every  there exists a closed positive current $T\in\{\omega_{\varepsilon}^p\}$ (any weak limit $T$ of $\omega_{\varepsilon}^p$ as $\varepsilon\to0$) satisfies $T\geq\delta'[Y]$ for some $\delta'>0$, that is,
$\int_YT\ge \delta'.$
\end{lemma}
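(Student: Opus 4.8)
The plan is to reduce everything to a single local resolution of the ideal sheaf of $Y$ followed by a gluing argument, exactly along the lines of Demailly--Păun \cite{DemPau04} and Boucksom \cite{Bou02}. First I would fix a coordinate cover $(U_j)$ of $X$ together with holomorphic functions $g_{j,1},\dots,g_{j,N_j}$ generating the ideal of $Y$ on $U_j$, and set $\psi_j=\log\big(\sum_k|g_{j,k}|^2\big)$. Using a partition of unity $(\theta_j)$ one forms a global quasi-psh function $\psi$ with analytic singularities along $Y$ of the shape $\psi=\log\big(\sum_j\theta_j e^{\psi_j}\big)+C$, which is $-A\omega$-psh for some constant $A>0$; rescaling $\omega$ (the conclusion is stated with $\tfrac12\omega$, so an innocuous normalization) we may take $A$ as small as we wish. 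The regularized functions are $\psi_\varepsilon=\log\big(\sum_j\theta_j e^{\psi_j}+\varepsilon^2\big)+C$, which are smooth, decrease pointwise to $\psi$ as $\varepsilon\downarrow0$, and satisfy $\omega_\varepsilon:=\omega+\sqrt{-1}\partial\bar\partial\psi_\varepsilon\geq\tfrac12\omega$ after the normalization. This gives \eqref{mass-cen-0}.

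The heart of the matter is the lower mass bound \eqref{mass-cen-1}. I would work near a fixed regular point $x_0\in Y$, so that after shrinking $U$ we may assume $Y\cap U$ is a smooth submanifold of codimension $p$, cut out by $p$ coordinate functions $z_1=\cdots=z_p=0$; the relevant local model for $e^{\psi}$ is then comparable to $|z'|^2:=|z_1|^2+\cdots+|z_p|^2$. On such a chart one computes directly that $\sqrt{-1}\partial\bar\partial\log(|z'|^2+\varepsilon^2)$ is a semipositive form whose $p$-th exterior power, restricted to the fiber directions, behaves like the normalized Fubini--Study form on the $\varepsilon$-ball in $\mathbb{C}^p$; integrating $\omega_\varepsilon^p\wedge\omega^{n-p}$ over $U\cap V_\varepsilon$ and using the coarea/Fubini decomposition in the fiber versus base directions yields a bound of the form $\int_{U\cap V_\varepsilon}\omega_\varepsilon^p\wedge\omega^{n-p}\geq\int_{\{|z'|<c\varepsilon\}}\big(\sqrt{-1}\partial\bar\partial\log(|z'|^2+\varepsilon^2)\big)^p\wedge(\text{positive }(n-p,n-p)\text{ form})\geq\delta_p(U)>0$, where $\delta_p(U)$ depends only on $U$ and not on $\varepsilon$; here one uses that $\int_{\mathbb{C}^p}\big(\sqrt{-1}\partial\bar\partial\log(|w|^2+1)\big)^p$ is a strictly positive universal constant and rescales $w=z'/\varepsilon$. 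This is the step I expect to be the main technical obstacle: one must be careful that the cross terms coming from the base variables $z_{p+1},\dots,z_n$ and from the partition of unity $\theta_j$ do not destroy positivity, which is handled by the elementary inequality $(\text{semipositive})^p\wedge\omega^{n-p}\geq (\text{leading fiber part})^p\wedge\omega^{n-p}$ and by the fact that $\omega_\varepsilon\geq\tfrac12\omega$ already absorbs all lower-order contributions.

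Finally, for the current statement: the forms $\omega_\varepsilon^p$ are closed positive $(p,p)$-currents with uniformly bounded mass (by the bounded mass property, or simply since $\int_X\omega_\varepsilon^p\wedge\omega^{n-p}$ is controlled via Stokes' theorem by $\int_X(\omega+\sqrt{-1}\partial\bar\partial\psi_\varepsilon)^n$ and the hypothesis), so by weak-$*$ compactness of bounded sets of currents a subsequence converges to a closed positive current $T\in\{\omega_\varepsilon^p\}$. Passing to the limit in \eqref{mass-cen-1} along a neighborhood basis $U\downarrow\{x_0\}$ and using the lower semicontinuity of mass on open sets together with the fact that $V_\varepsilon$ shrinks to $Y$, one gets that $T$ puts mass at least $\delta'>0$ on every neighborhood of every regular point of $Y$; since the singular locus of $Y$ has zero mass for $T$ (it is a proper analytic subset), Siu's semicontinuity / support considerations upgrade this to $T\geq\delta'[Y]$, hence $\int_Y T\geq\delta'$. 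I would carry out the steps in the order: (i) construct $\psi$ and $\psi_\varepsilon$ and verify \eqref{mass-cen-0}; (ii) prove the local mass bound \eqref{mass-cen-1} on a chart around a regular point; (iii) extract the limiting current and deduce $T\geq\delta'[Y]$.
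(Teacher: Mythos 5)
The paper itself gives no proof of Lemma \ref{DP-mass-cen}; it is recalled from Demailly--P\u{a}un \cite{DemPau04} and Boucksom \cite{Bou02}, and your proposal is a faithful reconstruction of that standard argument: local generators of the ideal of $Y$, gluing by a partition of unity, the regularization $\log(\cdot+\varepsilon^2)$ decreasing to $\psi$, the local model $\log(|z'|^2+\varepsilon^2)$ near a regular point together with the scale invariance of $\int_{\mathbb{C}^p}\big(\sqrt{-1}\partial\bar{\partial}\log(|w|^2+1)\big)^p$, and the support theorem for the limit current. You have correctly identified the technical heart (the uniform lower bound \eqref{mass-cen-1} via the fiber/base splitting), and the overall structure matches the cited sources.

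Two points need correction. First, to achieve \eqref{mass-cen-0} you should rescale $\psi$, not $\omega$: the metric is part of the data of the lemma, whereas replacing $\psi$ by $\delta\psi$ for small $\delta>0$ turns the uniform quasi-psh bound $\sqrt{-1}\partial\bar{\partial}\psi_{\varepsilon}\ge-A\omega$ into $\ge-\tfrac{1}{2}\omega$ and only dilates the local model, so \eqref{mass-cen-1} survives. Second, and more substantively, your last paragraph treats the $\omega_{\varepsilon}^p$ as closed positive currents whose masses are ``controlled via Stokes' theorem.'' On a general Hermitian manifold $\omega$ is not closed, so neither is $\omega_{\varepsilon}=\omega+\sqrt{-1}\partial\bar{\partial}\psi_{\varepsilon}$ nor $\omega_{\varepsilon}^p$, and $\int_X\omega_{\varepsilon}^p\wedge\omega^{n-p}$ is not a cohomological quantity; Stokes gives nothing for free here, and the uniform mass bound is precisely where one must either assume $\omega$ K\"ahler (as in \cite{DemPau04}) or invoke the bounded mass property and the Guedj--Lu--type estimates on mixed Monge--Amp\`ere masses. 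To be fair, this looseness is already present in the statement of the lemma as reproduced in the paper (the ``closed positive current $T\in\{\omega_{\varepsilon}^p\}$'' clause), and in the actual application (Step 3 of Proposition \ref{DP-conj}) the limit is taken for the forms $\widetilde{\alpha_{\varepsilon}}^p$ with the bounded mass property doing the work; but your write-up should not present closedness and mass bounds as automatic consequences of Stokes' theorem.
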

\begin{proposition}(=Conjecture 2)\label{DP-conj}
Let $(X^n,\omega)$ be a compact Hermitian manifold with bounded mass property and $\alpha$ a closed real $(1,1)$ form such that $[\alpha]$ is nef and $\int_X\alpha^n>0$. Then $[\alpha]$ is big. In particular, the conjecture 3 is true.
\end{proposition}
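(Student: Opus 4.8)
The plan is to reprove Proposition \ref{DP-conj} (Conjecture 2) using the Demailly--P\u{a}un mass-concentration philosophy rather than the Monge--Amp\`ere estimate of the previous subsection, so that the argument parallels \cite{DemPau04} and \cite{TosWei12}. Since $[\alpha]$ is nef with $\int_X\alpha^n>0$, by Lamari's Lemma \ref{lamari99} it suffices to produce $\varepsilon,\delta>0$ so that for every Gauduchon metric $\omega_G$ one has $\int_X\alpha_\varepsilon\wedge\omega_G^{n-1}\ge\varepsilon(1+\delta)\int_X\omega\wedge\omega_G^{n-1}$, where $\alpha_\varepsilon=\alpha+\varepsilon\omega+\sqrt{-1}\partial\bar\partial u_\varepsilon>0$. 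First I would reduce to the case where there is a proper analytic subset carrying positive mass: pick finitely many points $x_1,\dots,x_m$ and set $Y$ to be their union (or a subvariety through them), and apply Lemma \ref{DP-mass-cen} to get the quasi-psh $\psi$ with analytic singularities along $Y$ and the decreasing smooth approximants $\psi_\varepsilon$ with $\omega_\varepsilon:=\omega+\sqrt{-1}\partial\bar\partial\psi_\varepsilon\ge\frac12\omega$.

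The heart of the argument is a Monge--Amp\`ere comparison in the spirit of \cite{DemPau04}: one solves, via Tosatti--Weinkove \cite{TosWei10}, a complex Monge--Amp\`ere equation of the form $(\alpha_\varepsilon+\sqrt{-1}\partial\bar\partial\varphi_\varepsilon)^n=c_\varepsilon\,\omega_\varepsilon^n$ (or against a mass-concentrated right-hand side supported near $Y$), where the normalizing constant is controlled using the bounded mass property: $\int_X(\alpha_\varepsilon+\sqrt{-1}\partial\bar\partial\varphi_\varepsilon)^n=\int_X\alpha_\varepsilon^n=\int_X\alpha^n+O(\varepsilon)$, exactly as in the computation \eqref{alpha+e-ub1}. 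Then, following Popovici's observation as used in Step 1 of the previous proof, one pairs the solution metric $\widetilde{\alpha_\varepsilon}:=\alpha_\varepsilon+\sqrt{-1}\partial\bar\partial\varphi_\varepsilon$ against $\omega_G^{n-1}$ and against $\omega$ and applies Cauchy--Schwarz together with the pointwise inequality $(tr_{\omega_G}\omega)\le n\,\frac{\widetilde{\alpha_\varepsilon}^n}{\omega_G^n}\,\frac{1}{\ \cdot\ }$ type bounds to obtain
\[
\frac{\int_X\alpha_\varepsilon\wedge\omega_G^{n-1}}{\varepsilon\int_X\omega\wedge\omega_G^{n-1}}\ \ge\ \frac{\int_X\widetilde{\alpha_\varepsilon}^n}{n\varepsilon\int_X\omega\wedge\widetilde{\alpha_\varepsilon}^{n-1}},
\]
which is precisely inequality (2.2) of Step 1 above. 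Finally, the bounded mass property plus Stokes gives $n\varepsilon\int_X\omega\wedge\widetilde{\alpha_\varepsilon}^{n-1}=n\varepsilon\int_X\omega\wedge\alpha^{n-1}+O(\varepsilon^2)$, so choosing $0<\varepsilon\ll\int_X\alpha^n/(n\int_X\omega\wedge\alpha^{n-1})$ when the latter is positive (and noting the inequality is trivial otherwise) yields $n\varepsilon\int_X\omega\wedge\widetilde{\alpha_\varepsilon}^{n-1}\le(1-\delta')\int_X\widetilde{\alpha_\varepsilon}^n$, hence \eqref{lamari-inequality} with $1+\delta=(1-\delta')^{-1}$.

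I expect the main obstacle to be the role of the mass concentration itself: in the K\"ahler case of \cite{DemPau04} one genuinely needs the concentrated current $T\ge\delta'[Y]$ to conclude that the limiting current is strictly positive (big) rather than merely nef, and here one must check that the non-K\"ahler error terms produced by $\partial\bar\partial\omega\ne0$ and by passing to Gauduchon metrics do not swamp the $\delta'$ gained from Lemma \ref{DP-mass-cen}. The bounded mass property is exactly what tames these errors — it ensures all the $O(\varepsilon)$ and $O(\varepsilon^2)$ remainders in the Stokes computations are uniform in $\omega_G$ — but making the uniformity explicit (independence of the constants from the Gauduchon metric, which can degenerate) is the delicate point. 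Once that uniformity is in hand, the Lamari criterion upgrades $[\alpha]$ from nef to big, and then Conjecture 3 follows immediately by invoking Tosatti's theorem \cite[Theorem 1.3]{Tos16} for nef and big classes, as in Step 3 above. \qed
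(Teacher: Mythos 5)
Your proposal announces the Demailly--P\u{a}un mass-concentration strategy but does not actually execute it: the analytic subset $Y$, the quasi-psh function $\psi$ and the concentrated metrics $\omega_\varepsilon$ from Lemma \ref{DP-mass-cen} are introduced and then never enter your chain of inequalities. What you actually carry out is the Lamari--Popovici--Chiose argument (Lamari's criterion, a Monge--Amp\`ere equation, Cauchy--Schwarz, then the bounded mass property to control $\int_X\widetilde{\alpha_\varepsilon}^n=\int_X\alpha^n+O(\varepsilon)$ and $n\varepsilon\int_X\omega\wedge\widetilde{\alpha_\varepsilon}^{n-1}=O(\varepsilon)$). That argument is correct in outline, but it is precisely the paper's \emph{first} proof of Conjecture 2 (the proof of Theorem \ref{Kolo-To-ture}'s companion, Theorem \ref{DePa-ture}, in Section 2.2), not the proof the paper gives for Proposition \ref{DP-conj}. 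The paper's proof of this proposition never mentions Gauduchon metrics or Lamari's lemma: it solves $(\alpha+\varepsilon\omega+\sqrt{-1}\partial\bar{\partial}(u_\varepsilon+\phi_\varepsilon))^n=C_\varepsilon\omega_\varepsilon^n$ with the mass-concentrated right-hand side, compares the $p$ smallest and $n-p$ largest eigenvalues of $\widetilde{\alpha_\varepsilon}$ against $\omega_\varepsilon$ to show that any weak limit $T$ of $\widetilde{\alpha_\varepsilon}^p$ carries positive mass on the components of $Y$, and then concludes bigness via Skoda's extension theorem and Boucksom's Theorem 4.7. None of these steps appear in your write-up.

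There is also a concrete gap in the hybrid you propose. The Cauchy--Schwarz step that produces the key inequality $\int_X\alpha_\varepsilon\wedge\omega_G^{n-1}\big/\big(\varepsilon\int_X\omega\wedge\omega_G^{n-1}\big)\ge\int_X\widetilde{\alpha_\varepsilon}^n\big/\big(n\varepsilon\int_X\omega\wedge\widetilde{\alpha_\varepsilon}^{n-1}\big)$ uses in an essential way that the Monge--Amp\`ere measure equals $e^{b_\varepsilon}\,\omega\wedge\omega_G^{n-1}\big/\int_X\omega\wedge\omega_G^{n-1}$, i.e.\ the right-hand side must be built from the \emph{given Gauduchon metric} $\omega_G$; this is Popovici's point. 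If instead you solve against $c_\varepsilon\omega_\varepsilon^n$ (the mass-concentrated density), as you write, the factor $\int_X\omega\wedge\omega_G^{n-1}$ does not appear and the displayed inequality cannot be derived this way; conversely, if you use the $\omega_G$-right-hand side, the mass concentration along $Y$ is irrelevant. You must commit to one of the two routes. Your concluding remark that Conjecture 3 then follows from Tosatti's theorem for nef and big classes is fine and matches the paper. Finally, your worry about uniformity in $\omega_G$ is well placed but is resolved exactly as in the paper: the $O(\varepsilon)$ terms are controlled by the bounded mass property (via Guedj--Lu's bounds on mixed Monge--Amp\`ere volumes), independently of $\omega_G$.
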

\begin{proof}
The proof of Proposition \ref{DP-conj} is divided into three steps.\\
\textbf{Step 1.} In the same setup as above, consider the following complex Monge-Amp\`ere equations
\begin{equation}\label{cmaes}
\big(\alpha+\varepsilon\omega+\sqrt{-1}\partial\bar{\partial}(u_{\varepsilon}+\phi_{\varepsilon})\big)^n=C_{\varepsilon}\omega_{\varepsilon}^n
\end{equation}
with
$$\widetilde{\alpha_{\varepsilon}}:=\alpha+\varepsilon\omega+\sqrt{-1}\partial\bar{\partial}u_{\varepsilon}+\sqrt{-1}\partial\bar{\partial}\phi_{\varepsilon}>0,~~\sup_X(u_{\varepsilon}+\phi_{\varepsilon})=0,$$
where for every $\varepsilon>0,$ and some uniquely determined positive constants $C_{\varepsilon}$ and $\omega_{\varepsilon}$ is a family metrics construct by Demailly-P\u{a}un \cite{DemPau04}. In the K\"ahler case, we have
$$C_{\varepsilon}=\frac{\int_X\widetilde{\alpha_{\varepsilon}}^n}{\int_X\omega_{\varepsilon}^n}=\frac{\int_X(\alpha+\varepsilon\omega)^n}{\int_X\omega^n}\ge C_0=\frac{\int_X\alpha^n}{\int_X\omega^n}>0.$$
In the non-K\"ahler manifolds, by the bounded mass property again,
$$C_{\varepsilon}=\frac{\int_X\widetilde{\alpha_{\varepsilon}}^n}{\int_X\omega_{\varepsilon}^n}=\frac{\int_X\alpha^n+O(\varepsilon)}{\int_X\omega^n+O(\varepsilon)}=C_0>0.$$
\textbf{Step 2.} Let us denote by
$$\lambda_1(z)\le\cdots\le\lambda_n(z)$$
the eigenvalues of $\widetilde{\alpha_{\varepsilon}}(z)$ with respect to $\omega_{\varepsilon}(z)$, at every point $z\in X$ (these functions are continuous with respect to $z$ and of course depend also on $\varepsilon$). The equation (\ref{cmaes}) is equivalent to the fact that
$$\lambda_1(z)\cdots\lambda_n(z)=C_{\varepsilon}$$
is constant (which is bounded away from $0$).

Fix a regular point $x_0\in Y$ and a small neighborhood $U$ (meeting only the irreducible component of $x_0$ in $Y$). By Lemma \ref{DP-mass-cen}, we have a uniform lower bound
\begin{equation}\label{mass-cen-2}
\int_{U\cap V_{\varepsilon}}\omega_{\varepsilon}^p\wedge\omega^{n-p}\geq\delta_p(U)>0.
\end{equation}
Now, by looking at the $p$ smallest (resp. $(n-p)$ largest) eigenvalues $\lambda_j$ of $\widetilde{\alpha_{\varepsilon}}(z)$ with respect to $\omega_{\varepsilon}(z)$,
\begin{align}
\widetilde{\alpha_{\varepsilon}}^p&\geq\lambda_1\cdots\lambda_p\omega_{\varepsilon}^p, \label{mass-cen-3}\\
\widetilde{\alpha_{\varepsilon}}^{n-p}\wedge\omega_{\varepsilon}^p&\ge\frac{1}{n!}\lambda_{p+1}\cdots\lambda_n\omega_{\varepsilon}^n.\label{mass-cen-4}
\end{align}
The inequality (\ref{mass-cen-4}) and bound mass property implies
\begin{align*}
\int_X\lambda_{p+1}\cdots\lambda_n\omega_{\varepsilon}^n&\le n!\int_X\widetilde{\alpha_{\varepsilon}}^{n-p}\wedge\omega_{\varepsilon}^p\\
&\le n!\int_X\alpha^{n-p}\wedge\omega^p+O(\varepsilon)\leq M
\end{align*}
for some constant $M$ when $\varepsilon<<1.$ In particular, for every $\delta>0$ the subset $E_{\delta}\subset X$ of points $z\in X$ such that
$$\lambda_{p+1}(z)\cdots\lambda_n(z)>\frac{M}{\delta}.$$
Hence, combining above inequality with (\ref{mass-cen-0})
\begin{equation}\label{mass-cen-5}
\int_{E_{\delta}}\omega_{\varepsilon}^p\wedge\omega^{n-p}\le 2^{n-p}\int_{E_{\delta}}\omega_{\varepsilon}^n\leq 2^{n-p}\delta.
\end{equation}
The combination of (\ref{mass-cen-2}) and (\ref{mass-cen-5}) yields
$$\int_{(U\cap V_{\varepsilon})\setminus E_{\delta}}\omega_{\varepsilon}^p\wedge\omega^{n-p}\geq\delta_p(U)-2^{n-p}\delta.$$
On the other hand, by (\ref{mass-cen-3}), we have
$$\widetilde{\alpha_{\varepsilon}}^p\geq\frac{C_{\varepsilon}}{\lambda_{p+1}\cdots\lambda_n}\omega_{\varepsilon}^p\ge\frac{C_{\varepsilon}}{M/\delta}\omega_{\varepsilon}^p,~~~~on~~(U\cap V_{\varepsilon})\setminus E_{\delta}.$$
From this we infer
\begin{align}\label{mass-cen-6}
\nonumber\int_{U\cap V_{\varepsilon}}\widetilde{\alpha_{\varepsilon}}^p\wedge\omega^{n-p}
&\geq\frac{C_{\varepsilon}}{M/\delta}\int_{(U\cap V_{\varepsilon})\setminus E_{\delta}}\omega_{\varepsilon}^p\wedge\omega^{n-p}\\
&\geq\frac{C_{\varepsilon}}{M/\delta}(\delta_p(U)-2^{n-p}\delta)>0.
\end{align}
provided that $\delta$ is taken small enough, e.g.,$\delta=2^{-(n-p+1)}\delta_p(U).$ \\
\textbf{Step 3.} The family of $(p,p)$-forms is uniformly bounded in mass since
$$\int_X\widetilde{\alpha_{\varepsilon}}^p\wedge\omega^{n-p}=\int_X\alpha^p\wedge\omega^{n-p}+O(\varepsilon)\le M$$
by the bound mass property. Let $T$ be any weak limit of $\widetilde{\alpha_{\varepsilon}}^p.$ By (\ref{mass-cen-6}), $T$ carries nonzero mass on every $p$-codimensional component of $Y$ (near regular point). By Skoda's extension theorem, $1_YT$ is a closed positive current with support in $Y$, hence $1_YT=\sum c_j[Y_j]$ is a combination of the various components $Y_j$ of $Y$ with coefficients $c_j>0$. This shows that $T$ belongs to the cohomology class $\{\alpha\}^p.$ Argue as the proof of Theorem 4.7 in \cite{Bou02} (cf. \cite[Lemma 4.10]{Bou02}), we can finish the proof of Proposition \ref{DP-conj}.
\end{proof}

\textbf{{Acknowledgments:}}  The author would like to thank Professor Zhiwei Wang and Zhuo Liu for useful discussions and insightful suggestions, which helped greatly to improve the exposition of the present article. The author is greatly indebted to Professor Zhenlei Zhang for his stimulating conversations and encouragements towards this problem.


\vspace{15mm}
\end{document}